\documentclass[graybox]{svmult}
\makeatletter
\providecommand*{\toclevel@title}{0}
\providecommand*{\toclevel@author}{0}
\makeatother

\usepackage{mathptmx}       
\usepackage{helvet}         
\usepackage{courier}        
\usepackage{type1cm}        
\usepackage{makeidx}         
\usepackage{graphicx}        
\usepackage{multicol}        
\usepackage[bottom]{footmisc}

\usepackage{amsmath, amsfonts, amssymb}
\usepackage{hyperref}

\usepackage[backend=biber, style=trad-abbrv, url=false, eprint=false, sortcites=true, doi=false]{biblatex}
\AtBeginBibliography{\small}
\usepackage{xcolor}
\usepackage{tabularx}

\makeindex             

\usepackage{mathtools}
\mathtoolsset{showonlyrefs}
\newtheorem{assumption}{Assumption}

\renewenvironment{proof}[1][Proof]
{%
  \par\noindent\textit{#1.}\quad
}
{%
  \nobreak\hfill\(\square\)\par
}

\begin{document}

\title*{Finite-Data Error Bounds for Approximating the Koopman Operator: Sampling Measures, Super-Polynomial Convergence and Regularization}
\titlerunning{Finite-Data Error Bounds for Approximating the Koopman Operator} 
\author{Daniel Fassler, Rachel Morris, Jason Bramburger and Simone Brugiapaglia}
\institute{Daniel Fassler \at Concordia University, 1455 De Maisonneuve Blvd. W.
Montreal, QC  H3G 1M8, \email{daniel.fassler@mail.concordia.ca}
\and Rachel Morris \at 
 \email{rachel.morris@mail.concordia.ca}
 \and Jason Bramburger \at 
 \email{jason.bramburger@concordia.ca}
  \and Simone Brugiapaglia \at 
 \email{simone.brugiapaglia@concordia.ca}}
%
%
\maketitle

\abstract{The Koopman operator is a well-established framework for lifting nonlinear dynamical systems to an infinite-dimensional space where dynamics are linear. Extended Dynamic Mode Decomposition (EDMD) is a widely used method in data-driven dynamics as it provides a Galerkin approximation of the Koopman operator on the finite-dimensional span of a prescribed dictionary of observable functions. In particular, EDMD only requires samples from the dynamical system, describing the Koopman operator without knowledge of the underlying dynamics. While many studies analyze asymptotic convergence results for EDMD in terms of data, the question of more practical finite-data convergence results remains incomplete in general settings. In this work, we prove bounds for the finite-data EDMD Galerkin approximation of the Koopman operator by a rate proportional to the inverse of the root of the number of samples (the Monte Carlo rate). These results apply to multiple classes of problems, i.e., for discrete or continuous dynamics, including stochastic systems. In certain more structured setting, we demonstrate super-polynomial rates are achievable both theoretically and computationally. Finally, we derive recovery guarantees for an EDMD variant in the undersampled regime inspired by compressed sensing techniques.}

\section{Introduction}
Nonlinear dynamical systems arise naturally throughout problems in science and engineering. The \textit{Koopman operator} provides an exact linear representation of nonlinear dynamics by lifting the temporal evolution of the state space $\mathbb{X}$ to a function space. Specifically, the Koopman operator acts on \textit{observable functions} $\varphi:\mathbb{X} \to \mathbb{C}$ and evolves states $\mathbf{x}_t$ in time within $\varphi$. It is defined as $\mathcal{K}^\tau[\varphi](\mathbf{x}) = \mathbb{E}\left[\varphi(\mathbf{x}_{t+\tau}) | \mathbf{x}_t = \mathbf{x}\right]$, with $\tau$ controlling the time evolution in $\varphi$. The Koopman formalism and recent advances in sensing and computation have led to faster, more precise methods for gathering dynamical data has placed data-driven methods at the forefront of modern dynamical system analysis~\cite{annurev:/content/journals/10.1146/annurev-fluid-011212-140652,dmd_theory_applications, TranChaos, SchaefferSparseRecovery}.

Extended Dynamic Mode Decomposition (EDMD)~\cite{williams2015data} has emerged at the forefront of methods for approximating the Koopman operator from data by through a least-squares fit on a user-specified dictionary of observables. EDMD provides a bridge between theory and application, as it is a feasible method of approximating the operator on a set of observables from measurements alone. Asymptotic convergence results exist~\cite{bramburger_auxiliary_2023, korda2018convergence}. Moreover, explicit finite-data rates are also available for particular systems or dictionaries~\cite{yadav2025approximationkoopmanoperatorbernstein,zhang_zuazua23} and in more general settings~\cite{philipp2026variance,nuske_finite-data_2022}. However, in all cases, the convergence of the error obeys the classical Monte Carlo rate $n^{-1/2}$ for $n$ samples for square EDMD.\\ 

\noindent \textbf{Main Contributions: }The goal of this paper is to present unified convergence results for EDMD in the finite data regime by creating a direct link between the method and the classical theory of least-squares approximation for identically, independently distributed (i.i.d.) data. We first recover the Monte Carlo rate in Theorem~\ref{thm:EDMD_MonteCarlo_Convergence} using a proof technique that is agnostic to the type of system (deterministic/stochastic and discrete/continuous). Theorem~\ref{thm:EDMD_better_rate} leverages recent advances~\cite{adcock_monte_2023} in least-squares approximation to obtain novel super-polynomial convergence rates by letting the dictionary size grow with the number of data points in regimes with sufficient regularity. Finally, in Theorem~\ref{thm:recovery_EDMD_LASSO} and~\ref{thm:recovery_EDMD_QCBP}, we present recovery guarantees in the undersampled regime for a regularized version of EDMD by applying compressed sensing results. Each theorem is backed by numerical simulations, experiments are reproducible and available at {\small \url{https://github.com/Falsorr/Finite-Data-Error-bounds-for-EDMD}}.

\section{Convergence rates for the Monte Carlo approximation}\label{sec:MC_approx}

In this section, we present the main convergence result of the EDMD algorithm for i.i.d.\ sampling. We extend the numerical and theoretical convergence results obtained in~\cite{ nuske_finite-data_2022, doi:10.1137/23M1597873, yadav2025approximationkoopmanoperatorbernstein}, unifying results in the stochastic and deterministic settings. 

Let $\mathbb{X} \subseteq \mathbb{R}^d$ be the state space with a probability measure $\rho$ and the points $(\mathbf{x}_t^{(i)}, \mathbf{y}_t^{(i)})$ with $\mathbf{x}_t^{(i)}\stackrel{\text{i.i.d.}}{\sim}\rho$ and $\mathbf{y}_t^{(i)} = \mathbf{x}_{t+\tau}^{(i)}, \tau > 0$. Consider $\vec{\psi} = (\psi_1, \ldots, \psi_m)$ the expansion dictionary and $\vec{\phi} = (\phi_1, \ldots, \phi_l)$ the feature dictionary with $\psi_i, \phi_j \in L^2_\rho(\mathbb{X})$, both of which can be complex valued. Define the EDMD matrices 
   $ \Psi_n~=~(\psi_i(\mathbf{x}_t^{(j)}))_{(i,j)=1}^{m,n}$ and $\Phi_n^\tau~=(\phi_i(\mathbf{y}_t^{(j)}))_{(i,j)=1}^{l,n}.$
EDMD computes a finite-dimensional approximation of the Koopman operator $K_{mn}^\tau$, a solution of the least-squares problem:
\begin{equation}\label{eq:edmd_minimization}
    \min_{K \in \mathbb{C}^{l\times m}} \|\boldsymbol{\Phi_{n}^\tau} - K\boldsymbol{\Psi_n}\|_F^2.
\end{equation}
The EDMD approximation of the Koopman operator is $\mathcal{K}_{mn}^\tau\varphi := \mathbf{c}\cdot K_{mn}^\tau\vec{\psi}$ for a function $\varphi = \mathbf{c}\cdot\vec{\phi}$, $\mathbf{c} \in \mathbb{C}^l$. Define $K_m^\tau \coloneqq \lim_{n\to\infty} K_{mn}^\tau$, which is the matrix representation of projection of the Koopman operator on the span of our expansion dictionary $\mathcal{K}_{m}^\tau\varphi \coloneqq \mathbf{c}\cdot K_m^\tau \cdot \vec{\psi}$.

Typically, EDMD uses $\vec\psi = \vec\phi$, but different dictionaries can provide a more accurate Galerkin approximation of the Koopman Operator and its generator, the Lie Derivative. This means that EDMD can be used to understand global properties of dynamical systems~\cite{bramburger_auxiliary_2023, bramburger_invariant_2024}. Additionally, it also implies our results and can be used to extend our results to other techniques by framing them as EDMD methods. For example, model identification can be cast as an EDMD problem by taking $\vec\phi = (\mathbf{x} \mapsto x_i)_{i=1}^d$ and $\vec\psi$ as a user-specified dictionary. 
\begin{theorem}[EDMD with i.i.d.\ samples converge at the rate $\mathcal{O}\left(n^{-1/2}\right)$]\label{thm:EDMD_MonteCarlo_Convergence}
    Let $\tau > 0$, $0 < \delta,\epsilon<1$. Moreover, suppose $\vec\psi$ are orthonormal functions in $L^2_\rho(\mathbb{X})$, $\sum_{i=1}^m\psi_i^2 \in L^2_\rho(\mathbb{X})$ and  that $\|\mathcal{K}_m^\tau [\phi_j]\|_{L^2_\rho(\mathbb{X})} \leq L_j$ for some $L_j > 0$. Then there exists $c_\delta > 0$, $\kappa = \|\sum_{i=1}^m\psi_i^2\|_{L^\infty(\mathbb{X})}$ such that for all $n \geq c_\delta\cdot\kappa\cdot\log(\frac{m}{\epsilon})$ the following holds: Let $\{\mathbf{x}_t^{(i)}\}_{i = 1}^n $ be $n$ data points sampled i.i.d.\ from $\rho$ and $\mathbf{y}_t^{(i)} = \mathbf{x}_{t+\tau}^{(i)}$, and let $K_{mn}^\tau$ be the solution to~\eqref{eq:edmd_minimization}. Define the diagonal matrix $A$ by $A_{jj} = \min\left\{1, \frac{L_j}{\|\mathcal{K}_{mn}^\tau[\phi_j]\|_{L^2_\rho(\mathbb{X})}}\right\}$ and  $\widetilde{K}_{mn}^\tau = AK_{mn}^\tau$. Then, with probability at least $1-l\epsilon$,
\begin{equation}\label{eq:monte_carlo_rate_EDMD}
        \resizebox{\textwidth}{!}{
        $\displaystyle\mathbb{E}\|\widetilde{K}_{mn}^\tau - K_{m}^\tau\|_F^2 \leq \frac{1}{n}\cdot\frac{\left\|\sum_{i=1}^m \psi_i^ 2\right\|_{L^2_\rho(\mathbb{X})}}{(1-\delta)^2}\sum_{j=1}^l\left\|\left(\mathcal{K}^\tau[\phi_j] - \mathcal{K}_m^\tau[\phi_j]\right)^ 2\right\|_{L^2_\rho(\mathbb{X})} + 4 \left(\sum_{j=1}^l L_j^2\right)\epsilon.$}
    \end{equation}
\end{theorem}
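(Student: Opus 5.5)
The plan is to decouple the EDMD problem~\eqref{eq:edmd_minimization} row by row and recognise each row as a classical weighted least-squares problem with i.i.d.\ samples, in the setting of Cohen--Davenport--Leviatan type results (the truncated least-squares estimator). Concretely, the Frobenius norm splits over the rows of $K$, so the $j$-th row $\mathbf{k}_j$ of $K_{mn}^\tau$ minimizes $\sum_{i=1}^n |\phi_j(\mathbf{y}_t^{(i)}) - \mathbf{k}_j\cdot\vec\psi(\mathbf{x}_t^{(i)})|^2$. Since $\mathbf{x}_t^{(i)}\sim\rho$ i.i.d., I would view $\phi_j(\mathbf{y}_t^{(i)})$ as a noisy sample of the target $f_j := \mathcal{K}^\tau[\phi_j]$ at $\mathbf{x}_t^{(i)}$: $\phi_j(\mathbf{y}_t^{(i)}) = f_j(\mathbf{x}_t^{(i)}) + \eta_j^{(i)}$ with $\mathbb{E}[\eta_j^{(i)}\mid \mathbf{x}_t^{(i)}]=0$ by the definition of $\mathcal{K}^\tau$ as a conditional expectation. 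This covers deterministic dynamics ($\eta\equiv 0$) and stochastic ones alike. By orthonormality of $\vec\psi$, the row of $K_m^\tau$ is the coefficient vector of the $L^2_\rho$-projection $\mathcal{K}_m^\tau[\phi_j]$, and $\|\widetilde{\mathbf{k}}_j-\mathbf{k}_j^\infty\|_2 = \|\widetilde{\mathcal{K}}_{mn}^\tau[\phi_j]-\mathcal{K}_m^\tau[\phi_j]\|_{L^2_\rho}$.

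Step two is the stability event. Let $G = \frac1n\Psi_n\Psi_n^*$, so that $\mathbb{E}G = I$. The matrix Chernoff bound (each summand has norm at most $\kappa$) gives $\|G-I\|\le\delta$ with probability at least $1-\epsilon$ once $n\ge c_\delta\kappa\log(m/\epsilon)$. This event $\Omega$ is the same for every row, because $\Psi_n$ does not depend on $j$; the paper's $1-l\epsilon$ simply allows a separate event per row in a union bound. On $\Omega$, $\mathbf{k}_j - \mathbf{k}_j^\infty = G^{-1}\frac1n\Psi_n \mathbf{r}_j$, where $r_j^{(i)} = \phi_j(\mathbf{y}_t^{(i)}) - \mathcal{K}_m^\tau[\phi_j](\mathbf{x}_t^{(i)})$. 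Each residual is orthogonal in expectation to every $\psi_k$: the noise part by the conditional-mean property, and the deterministic part $f_j-\mathcal{K}_m^\tau[\phi_j]$ by Galerkin orthogonality. Hence $\|\mathbf{k}_j-\mathbf{k}_j^\infty\|^2 \le (1-\delta)^{-2}\|\frac1n\Psi_n\mathbf{r}_j\|^2$. The expectation of the right-hand side is a variance of a mean of i.i.d.\ zero-mean vectors, namely $\frac1n\mathbb{E}[\sum_k|\psi_k|^2 |r_j|^2]$. Cauchy--Schwarz bounds this by $\frac1n\|\sum\psi_k^2\|_{L^2_\rho}\,\|r_j^2\|_{L^2_\rho}$, which is the first term of~\eqref{eq:monte_carlo_rate_EDMD}. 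In the deterministic case $r_j = \mathcal{K}^\tau[\phi_j]-\mathcal{K}_m^\tau[\phi_j]$ exactly. In the stochastic case I would either interpret the norm with the noise included or condition and use Jensen; this identification needs care.

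Step three handles the complement of $\Omega$ and the truncation. The matrix $A$ rescales row $j$ so that $\|\widetilde{\mathcal{K}}_{mn}^\tau[\phi_j]\|\le L_j$. Rescaling toward zero is a projection onto a ball containing $\mathcal{K}_m^\tau[\phi_j]$, so it is non-expansive and the bound from step two survives on $\Omega$. On $\Omega^c$ the row error is at most $(L_j+L_j)^2 = 4L_j^2$, and $\mathbb{P}(\Omega^c)\le\epsilon$ contributes $4\epsilon\sum_j L_j^2$. Summing over $j$ gives the claim.

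The main obstacle is that conditioning on $\Omega$ breaks the i.i.d.\ structure in the variance computation. I would avoid it by bounding $\mathbb{E}[\mathbf{1}_\Omega\|\cdot\|^2]\le(1-\delta)^{-2}\mathbb{E}\|\frac1n\Psi_n\mathbf{r}_j\|^2$ without conditioning, since the integrand is nonnegative, exactly as in Cohen--Davenport--Leviatan. A secondary point is making the stochastic-noise term match the stated right-hand side.
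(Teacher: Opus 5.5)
Your proposal follows the paper's proof. The sketch also splits \eqref{eq:edmd_minimization} into $l$ row-wise least-squares problems and bounds each by the truncated least-squares estimate of \cite[Chapter 5]{adcock_book_2022}. That estimate is exactly the Cohen--Davenport--Leviatan argument you write out: matrix Chernoff for $\lambda_{\min}(G)\ge 1-\delta$, the identity $\mathbf{k}_j-\mathbf{k}_j^\infty=G^{-1}\frac1n\Psi_n\mathbf{r}_j$ on the good event, nonexpansiveness of the radial truncation $A_{jj}$, and $4L_j^2\,\mathbb{P}(\Omega^c)$ on the bad event. The paper then sums over rows, as you do. For deterministic dynamics your argument is complete. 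You are also right that the stability event does not depend on $j$, so no union bound over rows is needed; your argument in fact bounds the expectation outright.

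The point you leave open for stochastic dynamics cannot be closed, and the ``condition and use Jensen'' route goes the wrong way. Take a generic sample pair $(\mathbf{x},\mathbf{y})$ and set $f_j=\mathcal{K}^\tau[\phi_j]$, $g_j=\mathcal{K}_m^\tau[\phi_j]$, $r_j=\phi_j(\mathbf{y})-g_j(\mathbf{x})$. The cross term vanishes by the conditional-mean property, so
\[
\mathbb{E}\bigl[|r_j|^2 \bigm| \mathbf{x}\bigr] = |f_j(\mathbf{x})-g_j(\mathbf{x})|^2 + \operatorname{Var}\bigl(\phi_j(\mathbf{y}) \bigm| \mathbf{x}\bigr) \ge |f_j(\mathbf{x})-g_j(\mathbf{x})|^2 ,
\]
and the conditional variance is a genuine extra contribution.

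In fact \eqref{eq:monte_carlo_rate_EDMD} as stated fails for stochastic systems. Take $\mathbb{X}=[-1,1]$ with $\rho$ uniform, $m=l=1$, $\psi_1\equiv 1$, $\phi_1(y)=y$, $L_1=1$, and let $\mathbf{x}_{t+\tau}$ be uniform on $\{-1,1\}$, independent of $\mathbf{x}_t$. Then $f_1=g_1=0$, so the first term on the right vanishes. EDMD returns $\bar y=\frac1n\sum_i \mathbf{y}_t^{(i)}$, and $A_{11}=1$ since $|\bar y|\le 1$, so the left side equals $\mathbb{E}|\bar y|^2=1/n$. Meanwhile $\kappa=1$ permits $\epsilon=e^{-n/c_\delta}$, which makes the right side $4e^{-n/c_\delta}<1/n$ for large $n$. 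The stochastic logistic map of Figure~\ref{fig:monte_carlo_experiments} shows the same effect: for $\phi_1(x)=x$ and a Legendre dictionary of degree at least $2$, $\mathcal{K}^\tau[\phi_1]=x^2-1$ lies in the span, yet the noise $(\mu_n-1)x^2$ keeps the mean-square error of order $1/n$.

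So your first option is the right one. The first term must use $\|\mathbb{E}[|\phi_j(\mathbf{y})-g_j(\mathbf{x})|^2\mid\mathbf{x}]\|_{L^2_\rho(\mathbb{X})}$ in place of $\|(f_j-g_j)^2\|_{L^2_\rho(\mathbb{X})}$. Equivalently, by the triangle inequality, it picks up an extra term $\frac{1}{n(1-\delta)^2}\|\sum_i\psi_i^2\|_{L^2_\rho(\mathbb{X})}\|\operatorname{Var}(\phi_j(\mathbf{y})\mid\mathbf{x})\|_{L^2_\rho(\mathbb{X})}$. The paper's sketch, which simply invokes \cite[Chapter 5]{adcock_book_2022}, has the same omission. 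This is a defect of the statement in the stochastic case rather than of your method.
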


\begin{proof}
     We present here a sketch of the proof (see \cite[Theorem 3.2]{fassler_thesis_2025} for full details). 
    
    \textbf{Step 1: Decompose into $l$ least squares problems.} By standard norm properties, 
    \begin{equation}
        \min_{K \in \mathbb{C}^{l\times m}} \|\boldsymbol{\Phi_{n}^\tau} - K\boldsymbol{\Psi_n}\|_F^2 = \sum_{i=1}^l \min_{\mathbf{k}_i \in \mathbb{C}^m} \|\mathbf{b}_i - \Psi_n^*\mathbf{k}_i\|_2^2,
    \end{equation}
    where $\mathbf{b}_i, \mathbf{k}_i$ are the $i$th columns of $(\Phi_n^\tau)^*$ and $K^*$, respectively.
    
    \textbf{Step 2: Convergence rate for each problem.}  We  introduce the truncation operators $\mathcal{F}_{j}$ for $j\in[l]$ that project $\phi_j$ onto the $L^2_\rho(\mathbb{X})$-ball of radius $L_j$. With probability at least $1-\epsilon$, we obtain the following rate using results from~\cite[Chapter 5]{adcock_book_2022}:
    \begin{equation}
\resizebox{\textwidth}{!}{$
    \displaystyle\mathbb{E}\ \bigl\| \mathcal{F}_{j}(\mathcal{K}_{mn}^{\tau}[\phi_j]) - \mathcal{K}_m^{\tau}[\phi_j] \bigr\|_{L^2_\rho(\mathbb{X})}^{2}\leq\frac{\left\|\sum_{i=1}^m \psi_i^2\right\|_{L^2_\rho(\mathbb{X})}}{n(1-\delta)^2} \left\|\left(\mathcal{K}^\tau[\phi_j] - \mathcal{K}_m^\tau[\phi_j]\right)^2\right\|_{L^2_\rho(\mathbb{X})} + 4L_j^2\epsilon . $}
    \end{equation}
    
    \textbf{Step 3: Recombine and determine the convergence rate.} Due to the truncation step, we obtain an estimate between $\widetilde{K}_{mn}^\tau = AK_{mn}^\tau$ and $K_m^\tau$, where $A$ is the matrix representation of the truncation. All bounds hold with probability $1- l\epsilon$. 
    
\end{proof}

Theorem~\ref{thm:EDMD_MonteCarlo_Convergence} establishes Monte Carlo convergence for EDMD with minimal assumptions. The prefactor in the error bound depends on the choice of the expansion dictionary. One should aim for a small but expressive dictionary to keep both $\left\|\sum_{i=1}^m \psi_i^2\right\|_{L^2_\rho(\mathbb{X})}$ and $\|(\mathcal{K}^\tau[\phi_j] - \mathcal{K}_m^\tau[\phi_j])^2\|_{L^2_\rho(\mathbb{X})}$ small. The $L_j$ are an a priori upper bound on $\|\mathcal{K}_m^\tau[\phi_j]\|_{L^2_\rho(\mathbb{X})}$. In practice, we might not be able to compute the $L_j$ efficiently; however, this is not restrictive as $\epsilon$ can be chosen so that $\epsilon \leq m \exp(-\frac{n}{c_\delta \kappa})$. The term $(\sum_{j=1}^l L_j^2)\epsilon$ decays exponentially, hence the dominating factor in the error bound is decreasing at rate $\mathcal{O}(n^{-1/2})$.

Figure~\ref{fig:monte_carlo_experiments} contains the numerical experiments to validate the rate found in Theorem~\ref{thm:EDMD_MonteCarlo_Convergence}. We study the convergence of EDMD for different systems: the logistic map $x_{n+1} = \mu_nx_n^2 - 1$ with data drawn from the interval $[-1, 1]$, both in the chaotic deterministic ($\mu_n = 2)$ and stochastic ($\mu_n\sim \text{Unif}(0,2)$) regimes, and the Thomas model,
\begin{equation}
    \dot x_1 = 0.2\sin(5x_2) - bx_1, \qquad \dot x_2 = 0.2\sin(5x_3) - bx_2, \qquad\dot x_3 = 0.2\sin(5x_1) - bx_3,
\end{equation}
rescaled so that the state space is the unit cube for $ b = 0.208186$. These three models illustrate the generality of Theorem~\ref{thm:EDMD_MonteCarlo_Convergence}, across deterministic and stochastic, discrete and continuous systems. We consider $\vec{\psi} = \vec{\phi}$ and compare the performance of the Legendre and Chebyshev polynomials, monomials, and the Fourier basis.

\begin{figure}[t]
    \centering
    \includegraphics[width=\textwidth]{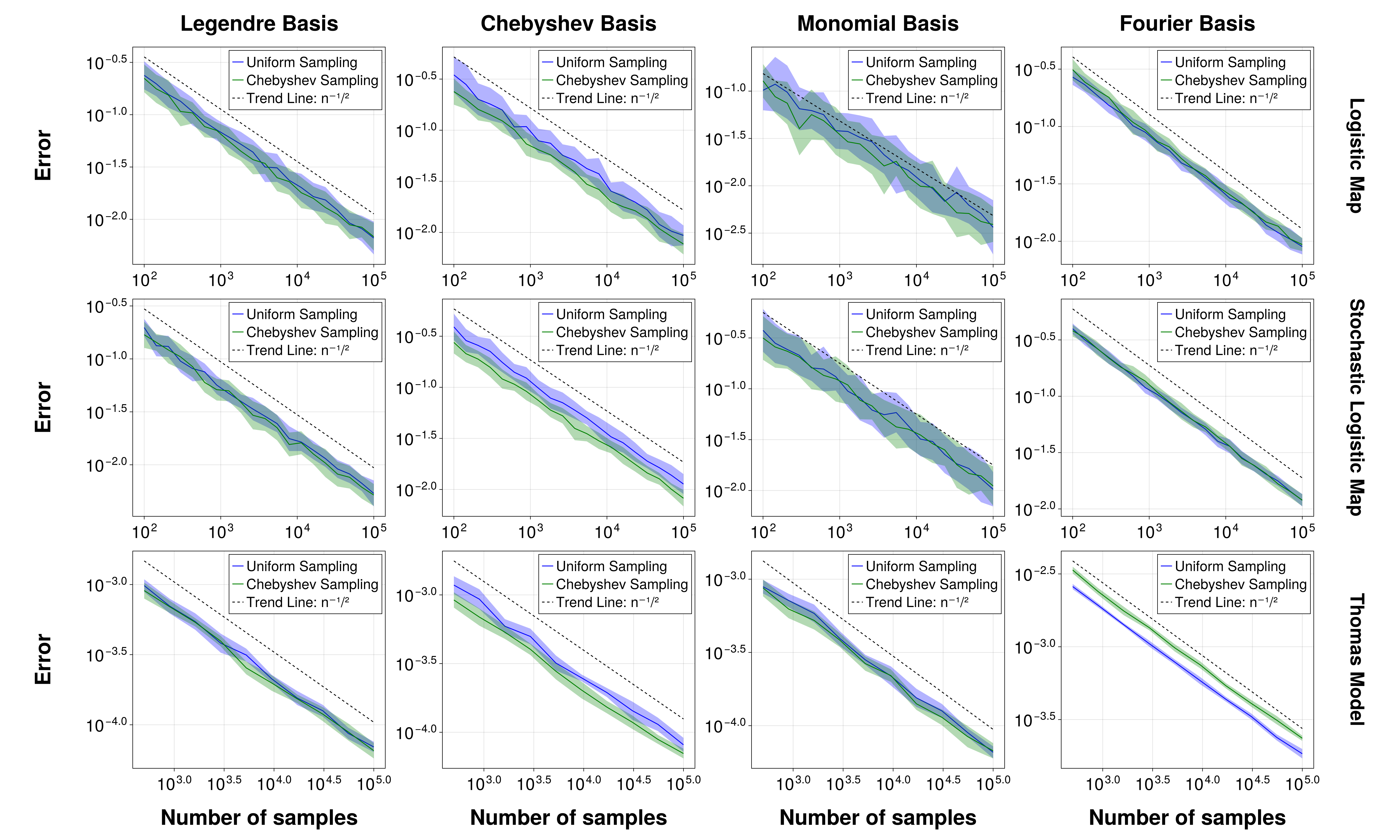}
    \caption{Convergence of $K_{mn}^\tau$ to $K_m^{\tau, HF}$ for fixed dictionary size}
    \label{fig:monte_carlo_experiments}
\end{figure}

The experiments are designed as follows. We know from~\cite{bramburger_auxiliary_2023} that $ \lim_{n\to\infty} K_{mn}^\tau = K_m^\tau$, so we measure the error by computing a ``high-fidelity" approximation of the Koopman matrix $K_m^{\tau, HF}$ through an EDMD approximation with $10^7$ samples. Then, we applied EDMD with a sample size varying from $n = 10^2$ to $n = 10^5$ and computed the relative error $\|K_{mn}^\tau - K_m^{\tau, HF}\|_F / \|K_m^{\tau, HF}\|_F$ with $10$ repetitions.

In all cases, the error decays at the Monte Carlo rate, even in cases where $\vec{\psi}$ is not necessarily an orthonormal set in $L^2_\rho(\mathbb{X})$. This shows linear independence is sufficient for the rate of Theorem~\ref{thm:EDMD_MonteCarlo_Convergence} (one could always obtain an orthonormal dictionary from a linearly independent basis through Gram-Schmidt). Numerical performance is, however, impacted by using a linearly independent but not orthonormal set. The matrices $\Psi_n$ and $\Phi_n$ become ill-conditioned for a linearly independent dictionary as $n$ increases. For each system, the choice of dictionary and sampling measure impacts only the prefactor, making little difference in most cases except for the Thomas model with the Fourier basis exhibiting cleaner separation, with uniform sampling outperforming Chebyshev sampling.
Better results can be achieved in setups where $\vec\phi \subset \vec\psi$. A further investigation is done in~\cite[Section 3.2]{fassler_thesis_2025}.

\section{Speeding up the Monte Carlo rate}\label{sec:exp_rate}
In this section, we present improved EDMD convergence rates when the dynamics and expansion dictionary extend holomorphically to a Bernstein polyellipse, using Legendre or Chebyshev dictionaries with uniform or Chebyshev sampling measures. The \textit{Bernestein ellipse} for parameter $\rho > 1$  is $\mathcal{E}_\rho = \left\{(z+z^{-1})/2 : z\in\mathbb{C}, 1 \leq |z| \leq \rho \right\}$.
    Given $\vec{\rho} = (\rho_1, \rho_2, \ldots, \rho_N)$, the \textit{Bernstein polyellipse} for parameter $\vec{\rho}$ is $\mathcal{E}_{\vec{\rho}} = \mathcal{E}_{\rho_1} \times \mathcal{E}_{\rho_2} \times \ldots \times \mathcal{E}_{\rho_N} \subset \mathbb{C}^N$. Using results in~\cite{adcock_monte_2023} we show faster convergence is achieved by enlarging the expansion dictionary as $n$ increases.
\begin{theorem}[Faster-than-polynomial convergence rate for EDMD.]\label{thm:EDMD_better_rate}
    Let $0 < \epsilon, p < 1, \ \rho$ be either the uniform or Chebyshev measure on $\mathbb{X} = [-1, 1]^N$, and $\left\{\vec{\psi}_\nu\right\}_{\nu \in \mathbb{N}_0^N}$ be the tensorized Legendre or Chebyshev polynomials, respectively. For $n\geq 3$ take $\{\mathbf{x}_t^{(i)}\}_{i = 1}^n $ be $n$ data points sampled i.i.d.\ from $\rho$ and $\mathbf{y}_t^{(i)} = \mathbf{x}_{t+\tau}^{(i)}$. Then there exist sets $S_i \subseteq \mathbb{N}_0^N$ of cardinality $|S_i| \leq \lceil n / \log(n/\epsilon)\rceil$ such that for all $i \in [l]$, we have that with probability at least $1-l\epsilon$, the following holds. If $\mathcal{K}[\phi_i]$ admits a holomorphic extension to a Bernstein polyellipse $\mathcal{E}_{\vec{\rho}} \subset \mathbb{C}^N$ for all $i \in [l]$, then the EDMD approximation $\mathcal{K}_{mn}^\tau$ of the Koopman operator  $\mathcal{K}^\tau$ with expansion dictionary $\vec{\psi} = \left\{\vec{\psi}_\nu\right\}_{\nu \in \cup_i S_i} $ is unique and satisfies the following bound:\begin{equation}\label{eq:EDMD_better_rate}
        \|\mathcal{K}_{mn}^\tau[\phi_i] - \mathcal{K}^\tau[\phi_i]\|_{L^2_\rho(\mathbb{X})} \leq C(\vec{\rho}, p) \left(\frac{n}{\log(n/\epsilon)}\right)^{\frac{1}{2} - \frac{1}{p}},
    \end{equation}
    where $C(\vec{\rho}, p)$ is a constant that depends on $\vec{\rho}$ and $p$ only.
\end{theorem}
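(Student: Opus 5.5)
The proof reduces EDMD to $l$ independent least-squares problems and then applies the best $s$-term polynomial approximation theory of \cite{adcock_monte_2023} to each.

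\textbf{Step 1: decouple.} As in Step 1 of the proof of Theorem~\ref{thm:EDMD_MonteCarlo_Convergence}, the Frobenius minimization \eqref{eq:edmd_minimization} splits into $l$ independent least-squares problems. The $i$th problem fits the data $\mathbf{b}_i=(\phi_i(\mathbf{y}_t^{(j)}))_{j=1}^n$ by a linear combination of the $\psi_\nu$ evaluated at the $\mathbf{x}_t^{(j)}$. Its solution $\mathcal{K}_{mn}^\tau[\phi_i]$ is the discrete least-squares projection of the noisy observations $\phi_i(\mathbf{y}_t^{(j)})$ onto the polynomial space.

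The key observation is that, conditionally on $\mathbf{x}_t^{(j)}$, the target has conditional mean $\mathcal{K}^\tau[\phi_i](\mathbf{x}_t^{(j)})$. In the deterministic case the data are exactly samples $f_i(\mathbf{x}_t^{(j)})$ of $f_i:=\mathcal{K}^\tau[\phi_i]$. In the stochastic case the difference is a zero-mean noise term, which I would treat as additive noise in the least-squares error bound. So each column is a standard least-squares approximation of $f_i$ from i.i.d.\ samples drawn from $\rho$.

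\textbf{Step 2: choose the index sets.} By hypothesis, $f_i$ extends holomorphically to $\mathcal{E}_{\vec\rho}$. The results in \cite{adcock_monte_2023} give, for the uniform/Legendre and Chebyshev/Chebyshev pairings, the following ingredients:
\begin{itemize}
\item an index set $S_i$ with $|S_i|\le s:=\lceil n/\log(n/\epsilon)\rceil$, taken as the indices of the $s$ largest Legendre (resp.\ Chebyshev) coefficients, or a lower set containing them;
\item an $L^\infty$ best approximation error of order $C(\vec\rho,p)\,s^{1/2-1/p}$, via Stechkin's inequality and $\ell^p$-summability ($0<p<1$) of the coefficients of holomorphic functions on polyellipses.
\end{itemize}
We set the dictionary to $\vec\psi=\{\psi_\nu\}_{\nu\in\cup_i S_i}$. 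Since enlarging the space only decreases the best approximation error, each $f_i$ is still approximated at this rate. The union has size at most $ls$, which only alters constants or is absorbed into $\epsilon$.

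\textbf{Step 3: stability of discrete least squares.} For orthonormal polynomials on a lower set, $\kappa=\|\sum_\nu\psi_\nu^2\|_{L^\infty}$ is at most $|S|^{2}$ for Legendre and $|S|^{\log 3/\log 2}$ for Chebyshev. This is the main obstacle: with $n\asymp s\log(s/\epsilon)$ samples, a crude $\kappa$ bound does not give the matrix Chernoff condition $n\gtrsim\kappa\log(m/\epsilon)$ used in Theorem~\ref{thm:EDMD_MonteCarlo_Convergence}. I would invoke the specific sample-complexity result of \cite{adcock_monte_2023}, which handles this case (e.g.\ via the structure of the chosen sets, or a nonadaptive set built from the anisotropy $\vec\rho$), to get the following: with probability $1-\epsilon$, the Gram matrix $\frac1n\Psi_n\Psi_n^*$ satisfies $\|\frac1n\Psi_n\Psi_n^*-I\|\le\delta$. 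This yields uniqueness of $K_{mn}^\tau$ and the quasi-optimality estimate
\[
\|\mathcal{K}_{mn}^\tau[\phi_i]-f_i\|_{L^2_\rho}\lesssim \inf_{g\in\mathrm{span}\,\vec\psi}\|f_i-g\|_{L^\infty}+\text{noise term}.
\]
Combining this with Step 2 gives \eqref{eq:EDMD_better_rate} for each $i$ with probability at least $1-\epsilon$. A union bound over $i\in[l]$ gives probability at least $1-l\epsilon$.

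I expect the stochastic noise term to be the delicate point. I would bound it using the same Gram-matrix event, so that the noise contribution does not exceed the stated rate. If that fails, I would restrict the claim to deterministic dynamics, where the noise term vanishes.
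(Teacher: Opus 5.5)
Your skeleton matches the paper's: split \eqref{eq:edmd_minimization} into $l$ columnwise least-squares problems, then apply the Monte Carlo least-squares theorem of \cite{adcock_monte_2023}, which the paper invokes in packaged form as \cite[Theorem B.2]{dinverno2025surrogatemodelsdiffusiongraphs}. The gap is that your Step~2 set selection is incompatible with Step~3, and the cited theorem does not prove stability for the $s$ largest coefficients (or a lower set containing them). As you noticed, what controls stability is $\kappa(S)=\|\sum_{\nu\in S}\psi_\nu^2\|_{L^\infty}=\sum_{\nu\in S}\|\psi_\nu\|_{L^\infty}^2$, not $|S|$. Take $N=1$ with the uniform measure and Legendre polynomials. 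The $s$ largest coefficients of a function such as $e^x$ sit at degrees $0,\dots,s-1$, so $\kappa(S)=\sum_{k<s}(2k+1)=s^2$. With $n\asymp s\log(s/\epsilon)$, the condition $n\gtrsim\kappa(S)\log(|S|/\epsilon)$ then fails by a factor of about $s$. Nothing guarantees that $\frac1n\Psi_n\Psi_n^*$ is close to the identity, so your quasi-optimality estimate is unavailable.

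The missing idea is weighted sparsity. Choose $S$ depending on the holomorphy class, not on the individual $\mathcal{K}^\tau[\phi_i]$, under the weighted-cardinality constraint $\kappa(S)\lesssim n/\log(n/\epsilon)$. This gives the Chernoff condition, and since $|S|\le\kappa(S)$, also the stated cardinality bound. The rate then comes from $\ell^p$-summability of the weighted coefficients $(\|\psi_\nu\|_{L^\infty}|c_\nu|)_\nu$ of $\mathcal{K}^\tau[\phi_i]$, together with a weighted Stechkin inequality. This summability holds because the weights grow only polynomially while the coefficients decay exponentially on $\mathcal{E}_{\vec\rho}$. Your $L^\infty$ tail bound needs the weights too, since the sup-norm of $\sum_{\nu\notin S}c_\nu\psi_\nu$ is controlled by $\sum_{\nu\notin S}\|\psi_\nu\|_{L^\infty}|c_\nu|$. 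In the one-dimensional Legendre case this forces $|S|\lesssim\sqrt{n/\log(n/\epsilon)}$, which is exactly the quadratic scaling $n=\lceil3(m+1)^2\log(m+1)\rceil$ used for Figure~\ref{fig:exponential_rate}. The error, exponentially small in $|S|$, is still super-polynomial in $n$. So drop your own $S_i$ and use the cited theorem as a black box that supplies $S$.

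Two further corrections. First, enlarging the dictionary to $\cup_i S_i$ lowers the best-approximation error but raises $\kappa$, and hence the number of samples needed for stability. This is not ``absorbed into $\epsilon$''. Compensating by shrinking the $S_i$ would put an $l$-dependence into the constant, which the statement does not allow. Since the cited set works simultaneously for every function holomorphic on $\mathcal{E}_{\vec\rho}$, take $S_i=S$ for all $i$; the $l$ problems then share one matrix $\Psi_n$ and one stability event.

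Second, your first option for stochastic dynamics cannot succeed. Let $\mathbf{e}_i=(\phi_i(\mathbf{y}_t^{(j)})-\mathcal{K}^\tau[\phi_i](\mathbf{x}_t^{(j)}))_{j=1}^n$. The Gram event only yields a noise contribution of order $\|\mathbf{e}_i\|_2/\sqrt{n}$, which does not decay. Even in expectation, the variance term is of order $\sqrt{|S|/n}$ times the conditional standard deviation, hence at least of order $n^{-1/2}$. The right-hand side of \eqref{eq:EDMD_better_rate} is $o(n^{-1/2})$ for every $p<1$, so the bound fails for genuinely stochastic dynamics. Restricting to deterministic dynamics, where $\mathbf{e}_i=0$ and the noiseless theorem applies column by column, is therefore necessary rather than a fallback. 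It is also the setting in which the paper's one-line proof is valid.
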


\begin{proof}
    We write the EDMD problem as $l$ independent least-squares problems and then bound each of the problems separately using~\cite[Theorem B.2]{dinverno2025surrogatemodelsdiffusiongraphs}.   
\end{proof}

Although Theorem~\ref{thm:EDMD_better_rate} is an existence result, its assumptions hold for many systems, including polynomial systems and those involving entire functions. A key feature of this result is that the expansion dictionary grows with the number of data points. The bound provided in the statement of the theorem holds for any $0 < p < 1$, showing that this convergence is super-polynomial. 

Figure~\ref{fig:exponential_rate} demonstrates super-polynomial convergence numerically for the deterministic logistic map with $\vec{\psi}=\vec{\phi}$, $|\vec{\psi}|=m + 1$ where $m$ is the maximum degree of the polynomials in the basis, and $n=\lceil3(m+1)^2\log(m+1)\rceil$. Multiple polylogs scaling were tested, and this one performed the best empirically. We replace the high-fidelity Koopman matrix comparison of Section~\ref{sec:MC_approx} with a Monte Carlo approximation of the relative $L^2$-error, using $M=10n$ i.i.d. samples $\{\mathbf{z}_i\}_{i=1}^M$ from $\rho$ and the test observable $\varphi(x)=e^x$. Monomials become unstable near $n=4000$ due to the ill-conditioning of $\Psi_n$, unlike the orthogonal Legendre and Chebyshev bases.

\begin{figure}[t]
    \centering
\includegraphics[width=0.7\linewidth]{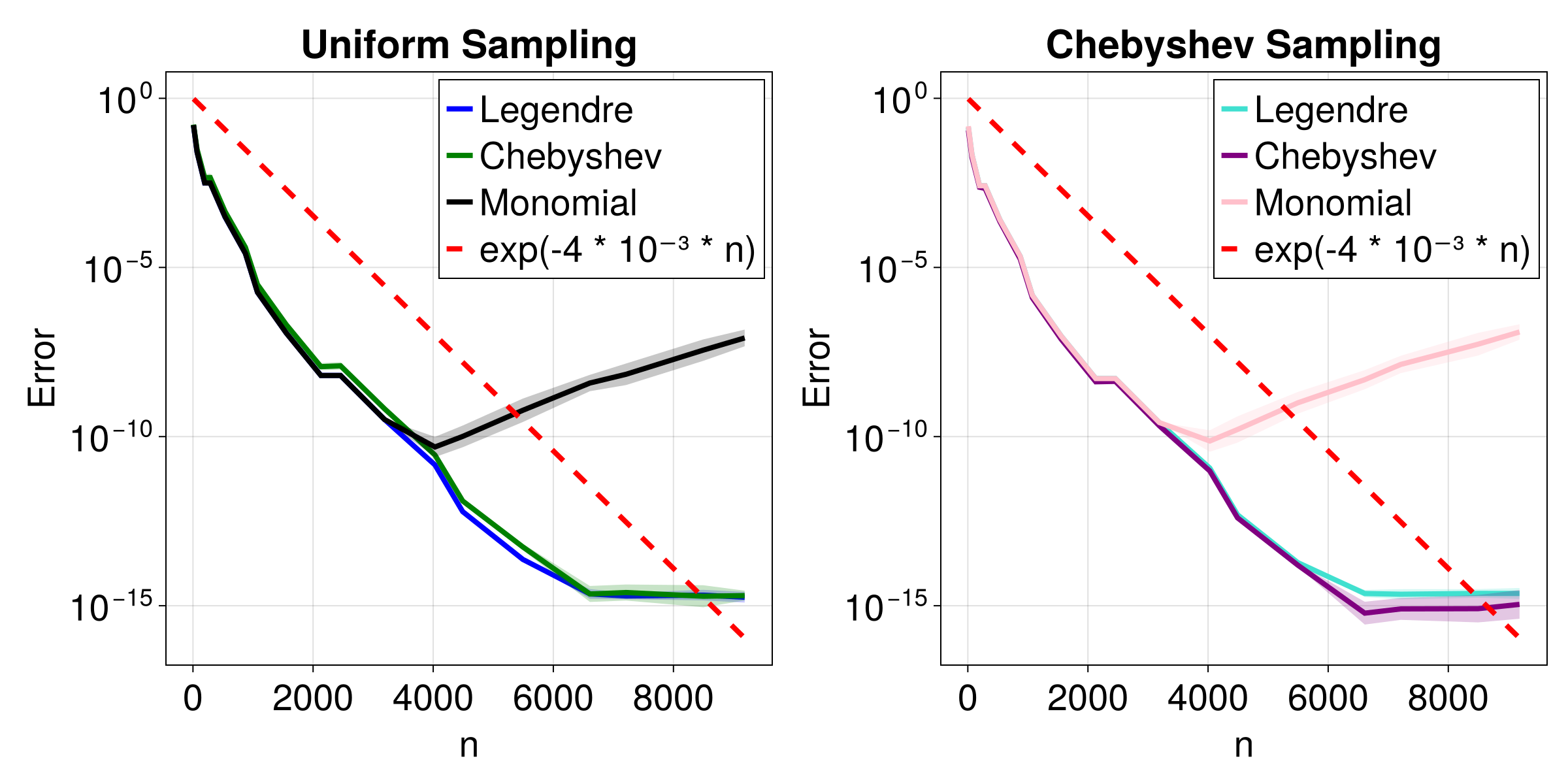}
    \caption{Convergence of $\mathcal{K}_{mn}^\tau$ to $\mathcal{K}^\tau$ for varying sample dictionary size and, $n = \lceil3(m+1)^2\log(m+1)\rceil$ for the logistic map in chaotic regime.}
    \label{fig:exponential_rate}
\end{figure}

\section{Recovery in the undersampled regime}

In this section, we present a preliminary investigation of compressed sensing methods for approximating the Koopman operator in the undersampled regime. We state our recovery guarantees in terms of the best $s$-term approximation error. The $\ell_1$ best $s$-term approximation error of a vector $\mathbf{c}\in\mathbb{C}^n$ is 
\( \sigma_s(\mathbf{c}) = \min_{\|\mathbf{z}\|_0 \leq s}\|\mathbf{c} - \mathbf{z}\|_1\)
where $\|\mathbf{z}\|_0$ counts the number of nonzero entries of $\mathbf{z}$. To obtain the desired recovery guarantees, we require stronger assumptions on our dictionary.
\begin{assumption}\label{as2}
    Suppose $\vec\psi$ are orthonormal functions in $L^2_\rho(\mathbb{X})$; moreover, assume there exists a constant $B \ge 1$ such that $\|\psi_j\|_\infty \leq B$ for all $j\in[m]$.
\end{assumption}

Assumption~\ref{as2} ensures $\vec\psi$ is a Bounded Orthonormal System (BOS). Recovery results in compressed sensing  rely on the measurement matrix having the Restricted Isometry Property (RIP) and the robust Null Space Property (rNSP). Sampling from a BOS yields both of the properties with high probability~\cite{adcock_book_2022}. All dictionaries considered in this paper are BOS except the monomials. We study the LASSO and QCBP variants of EDMD, inspired by the results of~\cite{SchaefferSparseRecovery}.
    We define the LASSO-EDMD and QCBP-EDMD problems as 
    \begin{align}
         &\min_{K\in \mathbb{C}^{l\times m}} \|{\Phi}_n^\tau - K\Psi_n\|_F^2 + \lambda\|K\|_{F,1}, \label{eq:LASSO_EDMD} \\
         &\min_{K\in \mathbb{C}^{l\times m}} \|K\|_F ~~\mathrm{ subject}~\mathrm{to }~~ \|\Phi_n^\tau - K\Psi_n\|_F \leq \sigma, \label{eq:BP_EDMD}
    \end{align}
    where $\|A\|_{F,1} = \sum_{i=1}^{l}\sum_{j=1}^{m}|A_{ij}|$ is the entrywise $\ell_1$-norm.
In comparison to classical EDMD \eqref{eq:edmd_minimization}, LASSO-EDMD regularization of the matrix approximation of the Koopman operator $K$. This can be understood as a ridge regression problem that discourages an overfitting of the data by instead seeking to recover  the dominant behaviour of the system. We use an $\ell_1$ penalization instead of an $\ell_2$ penalization to promote sparse features in our approximation. 

\begin{theorem}[Recovery guarantee for LASSO-EDMD]\label{thm:recovery_EDMD_LASSO}
    Let $\tau > 0$, $s \in [m],  0 < \epsilon < 1, 0 < \delta \leq \sqrt{2} - 1$, and suppose Assumption~\ref{as2} holds. There exists a universal $C > 0$ and, where $L(s) = \log\left(2B^2s\right)\left[\log\left(2B^2s \right)\cdot\log(2m)+\log\left(2\varepsilon^{-1}\log\left(2B^2s\right)\right)\right]$,
    so that for all $n\geq C\cdot \delta^{-2}\cdot B^2\cdot s\cdot L(s)$, the following holds: Let $\{\mathbf{x}_t^{(i)}\}_{i = 1}^n $ be $n$ data points sampled i.i.d.\ from $\rho$ and $\mathbf{y}_t^{(i)} = \mathbf{x}_{t+\tau}^{(i)}$. Suppose $K_m^\tau \in \mathbb{C}^{l \times m }$ satisfies $\Phi_n^\tau = K_m^\tau \Psi_n^\tau + E$ where $E \in \mathbb{C}^{l\times n}$ is a matrix of errors with $\|E\|_F \leq \eta$ for some $\eta > 0$. Take $\lambda \in (c_1 \eta/\sqrt{s}, c_2 \eta/\sqrt{s})$ for $c_2 \ge c_1>0$. Let $\left(K_{mn}^\tau\right)^\#$ be the solution of LASSO-EDMD~\eqref{eq:LASSO_EDMD}. Then with probability greater than $1-\epsilon$, there exist constants $C_1,D_1>0$ such that
\begin{equation}\label{eq:LASSO_EDMD_bounds}
    \bigl\|(K_{mn}^{\tau})^\# - K_m^\tau\bigr\|_{F}^{2}
    \leq
    \sum_{i=1}^{l}
    \left[
    C_1 \frac{\sigma_s(\mathbf{k}_{m,i})}{\sqrt{s}}
    + D_1\eta
    \right]^2 .
\end{equation}
where $\mathbf{k}_{m,i}$ is the $i$th column of $K_m^\tau$.
\end{theorem}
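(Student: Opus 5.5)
The plan mirrors Step 1 of the proof of Theorem~\ref{thm:EDMD_MonteCarlo_Convergence}: split LASSO-EDMD into $l$ independent vector problems and apply a standard compressed sensing recovery result for each. Because both the squared Frobenius fidelity term and the entrywise penalty $\|K\|_{F,1}$ are separable over the rows of $K$, we have
\begin{equation}
\min_{K} \|\Phi_n^\tau - K\Psi_n\|_F^2 + \lambda\|K\|_{F,1} = \sum_{i=1}^l \min_{\mathbf{k}_i\in\mathbb{C}^m} \|\mathbf{b}_i - \Psi_n^*\mathbf{k}_i\|_2^2 + \lambda\|\mathbf{k}_i\|_1 ,
\end{equation}
where $\mathbf{b}_i$ and $\mathbf{k}_i$ are the $i$th columns of $(\Phi_n^\tau)^*$ and $K^*$. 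Hence the $i$th column of $((K_{mn}^\tau)^\#)^*$ solves the $i$th vector LASSO problem. The model $\Phi_n^\tau = K_m^\tau\Psi_n + E$ gives $\mathbf{b}_i = \Psi_n^*\mathbf{k}_{m,i} + \mathbf{e}_i$, with $\mathbf{e}_i$ the $i$th column of $E^*$, and $\|\mathbf{e}_i\|_2 \le \|E\|_F\le \eta$.

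Next we put the measurement matrix in standard form. Set $A = n^{-1/2}\Psi_n^*\in\mathbb{C}^{n\times m}$, whose rows are $n^{-1/2}(\overline{\psi_j(\mathbf{x}^{(k)})})_j$ with $\mathbf{x}^{(k)}$ i.i.d.\ from $\rho$. Under Assumption~\ref{as2} this is the normalized sampling matrix of a bounded orthonormal system with constant $B$. By the BOS RIP theorem (\cite[Ch.~5--6]{adcock_book_2022}, or Foucart--Rauhut Thm.~12.31 style), there is a universal $C$ such that if $n\ge C\delta^{-2}B^2 s L(s)$, then with probability at least $1-\epsilon$ the matrix $A$ has RIP of order $2s$ with constant $\delta_{2s}\le\delta\le\sqrt2-1$. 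The $L(s)$ in the statement is the log factor of that theorem, with $\Theta = B$. RIP with $\delta_{2s}<\sqrt2-1$ (more generally $<4/\sqrt{41}$) implies the $\ell^2$-robust null space property of order $s$, with constants $\rho\in(0,1)$ and $\gamma>0$ depending only on $\delta$. This single event does not depend on $i$, so all $l$ problems are controlled simultaneously with probability $1-\epsilon$. No union bound is needed, and this is why we get $1-\epsilon$ rather than $1-l\epsilon$.

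Then we rescale and apply the LASSO recovery theorem. Dividing the $i$th objective by $n$ gives the problem $\min \|A\mathbf{k} - n^{-1/2}\mathbf{b}_i\|_2^2 + (\lambda/n)\|\mathbf{k}\|_1$, where the noise now has norm $\le \eta/\sqrt n$. The LASSO recovery theorem under the rNSP (\cite[Thm.~6.29]{adcock_book_2022}-type) says: for a tuning parameter proportional to the noise level over $\sqrt s$, the minimizer satisfies $\|\hat{\mathbf{k}} - \mathbf{k}\|_2\le C_1\sigma_s(\mathbf{k})/\sqrt s + D_1\cdot(\text{noise})$. The main obstacle is bookkeeping this normalization. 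The window $\lambda\in(c_1\eta/\sqrt s, c_2\eta/\sqrt s)$ has to match the theorem's admissible range after rescaling, and the unsquared versus squared form of the fidelity term must be reconciled. The plan is to absorb these factors of $n$, together with $c_1$ and $c_2$, into $C_1$ and $D_1$, which the statement only claims exist. Alternatively, one interprets $\eta$ as the noise level in the normalized model. I would first check this against the precise form of the cited LASSO theorem and adjust the constants.

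Finally, we recombine. On the RIP event, each column satisfies $\|\mathbf{k}_i^\# - \mathbf{k}_{m,i}\|_2\le C_1\sigma_s(\mathbf{k}_{m,i})/\sqrt s + D_1\eta$. Squaring and summing over $i\in[l]$ gives $\|(K_{mn}^\tau)^\# - K_m^\tau\|_F^2$ on the left, which is \eqref{eq:LASSO_EDMD_bounds}.
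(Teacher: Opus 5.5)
Your proposal is correct and follows the paper's proof essentially step for step. You split LASSO-EDMD row-wise into $l$ vector LASSO problems, use a single BOS RIP event of order $2s$ (hence probability $1-\epsilon$ with no union bound), pass from RIP to the rNSP, apply the rNSP-based LASSO error bound, and square and sum over $i$.

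The normalization you flag resolves cleanly, and not by absorbing growing factors of $n$ into the constants, which would not be acceptable. The paper skips this point silently by asserting RIP for the unnormalized $\Psi_n^*$. After rescaling, $\lambda' = \lambda/n$ and the noise is at most $\eta/\sqrt{n}$. The term $\|\mathbf{e}_i\|_2^2/(\lambda\sqrt{s}) \le \eta/c_1$ is scale invariant, while the $\lambda'\sqrt{s}$ and noise terms pick up factors $n^{-1}, n^{-1/2} \le 1$. So $C_1$ and $D_1$ depend only on $\delta$, $c_1$ and $c_2$, not on $n$.
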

\begin{proof}
   By~\cite[Section 4.1.2]{fassler_thesis_2025}, the LASSO-EDMD problem~\eqref{eq:LASSO_EDMD} can be written as
    \begin{equation}\label{eq:classical_LASSO_for_EDMD}
        \min_{K \in \mathbb{C}^{l\times m}} \|\Phi_n^\tau - K\Psi_n\|_F^2 + \lambda\|K\|_{F,1} = \sum_{i=1}^l\min_{\mathbf{k}_i \in \mathbb{C}^m} \|\mathbf{b}_i - \Psi_n^*\mathbf{k}_i\|_2^2 + \lambda\|\mathbf{k}_i\|_1
    \end{equation}
    where $\mathbf{k}_i, \mathbf{b}_i$ are the columns of $K^*$ and $(\Phi_n^\tau)^*$, respectively. Since $\Psi_n^*$ is a measurement matrix from sampling a BOS, we apply~\cite[Theorem 6.15]{adcock_book_2022} to conclude that with probability greater than $1-\epsilon$, $\Psi_n^*$ has the RIP of order $2s$ with $\delta_{2s} < \delta \leq  \sqrt{2} - 1$. Next we apply~\cite[Theorem 6.11]{adcock_book_2022} to conclude that $\Psi_n^*$ has the rNSP of order $s$ with constants $\rho, \gamma$ that depend on $\delta_{2s}$ only. Finally, we use~\cite[Theorem 6.28]{adcock_book_2022} to conclude that there exists constants $C_2, C_3$ depending on $\rho$ and $\gamma$ only such that
    \begin{equation}
        \|\mathbf{k}_i^\# - \mathbf{k}_{m,i}\|_2 \leq C_2 \frac{\sigma_s(\mathbf{k}_{m,i})}{\sqrt{s}} + \frac{\lambda C_3^2}{8C_2} \sqrt{s} + \frac{C_2}{2}\frac{\|\mathbf{e}_i\|_2^2}{\lambda\sqrt{s}} + \frac{C_3}{2}\|\mathbf{e}_i\|_2,
    \end{equation}
    where $\mathbf{k}_i^\#$ is the solution of~\eqref{eq:classical_LASSO_for_EDMD} and $\mathbf{e}_i$ is the $i$th column of $E^*$. Since we have $c_1\frac{\eta}{\sqrt{s}} \leq \lambda \leq c_2 \frac{\eta}{\sqrt{s}}$, the bounds can be simplified to $\|\mathbf{k}_i^\# - \mathbf{k}_{m,i}\|_2 \leq C_1 \frac{\sigma_s(\mathbf{k}_{m,i})}{\sqrt{s}} + D_1\eta$,
     where $D_1$ is a constant depending on $\rho$ and $\gamma$ only. We recover~\eqref{eq:LASSO_EDMD_bounds} by noticing that $\mathbf{k}_i^\#$ is the $i$th row of $\left(K_{mn}^\tau\right)^\#$ and using standard properties of the Frobenius norm. 
\end{proof}

Theorem~\ref{thm:recovery_EDMD_LASSO} shows that with sufficient data, the error in the LASSO-EDMD approximation of the Koopman operator is comparable to the best $s$-term approximation of $K_m^\tau$ up to noise. If the Koopman operator is $s$-sparse in the representation of the dictionaries $(\vec\phi, \vec\psi)$, then $\sigma_s(\mathbf{k}_{m,i}) = 0$, and we recover $K_{m}^\tau$ up to $\eta^2$.

The Quadratically Constrained Basis Pursuit (QCBP) problem~\eqref{eq:BP_EDMD} can also be cast as an EDMD problem for $\sigma = 0$. It does not suffer from a trade-off between the data-fit term and the regularizer like LASSO-EDMD but is less robust to noise.

\begin{theorem}\label{thm:recovery_EDMD_QCBP}
    Under the assumptions of Theorem~\ref{thm:recovery_EDMD_LASSO}, for all $n \geq C\cdot \delta^{-2}\cdot B^2\cdot s\cdot L(s)$,  the following holds: Let $\{\mathbf{x}_t^{(i)}\}_{i = 1}^n $ be $n$ data points sampled i.i.d.\ from $\rho$ and $\mathbf{y}_t^{(i)} = \mathbf{x}_{t+\tau}^{(i)}$. Suppose $K_m^\tau \in \mathbb{C}^{l \times m }$ satisfies $\Phi_n^\tau = K_m^\tau \Psi_n^\tau$. Let $\left(K_{mn}^\tau\right)^\#$ be the solution of LASSO-QCBP~\eqref{eq:BP_EDMD} with $\sigma = 0$. Then with probability greater than $1-\epsilon$, there exist a constant $C_1$ such that
    \begin{equation}
        \bigl\|(K_{mn}^{\tau})^\# - K_m^\tau\bigr\|_{F}^{2}
    \leq
    C_1\sum_{i=1}^{l}
     \frac{\sigma_s(\mathbf{k}_{m,i})^2}{s}.
    \end{equation}
\end{theorem}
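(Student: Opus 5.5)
The plan mirrors the proof of Theorem~\ref{thm:recovery_EDMD_LASSO}, replacing the LASSO decoder with the QCBP decoder at zero noise. Throughout, $\|\cdot\|_F$ in~\eqref{eq:BP_EDMD} is understood as the entrywise $\ell_1$ norm $\|\cdot\|_{F,1}$, since QCBP is the $\ell_1$-minimisation program; the minimiser is what matters.

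\textbf{Step 1: decouple into column problems.} Set $\sigma=0$, so the constraint becomes $\Phi_n^\tau = K\Psi_n$. Taking conjugate transposes, this is equivalent to $\Psi_n^*\mathbf{k}_i = \mathbf{b}_i$ for each $i\in[l]$, where $\mathbf{k}_i$ and $\mathbf{b}_i$ are the $i$th columns of $K^*$ and $(\Phi_n^\tau)^*$. The objective $\|K\|_{F,1}=\sum_i\|\mathbf{k}_i\|_1$ is separable, and the constraints do not couple distinct columns. Hence the problem splits into $l$ independent basis pursuit problems,
\begin{equation}
\min_{\mathbf{k}_i\in\mathbb{C}^m}\|\mathbf{k}_i\|_1 \quad\text{subject to}\quad \Psi_n^*\mathbf{k}_i=\mathbf{b}_i,
\end{equation}
and the $i$th row of $(K_{mn}^\tau)^\#$ is (the conjugate of) a solution $\mathbf{k}_i^\#$. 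By hypothesis $\Phi_n^\tau = K_m^\tau\Psi_n$, so $\mathbf{b}_i=\Psi_n^*\mathbf{k}_{m,i}$ exactly. Each problem is therefore noiseless recovery of $\mathbf{k}_{m,i}$.

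\textbf{Step 2: a single RIP/rNSP event.} As in the proof of Theorem~\ref{thm:recovery_EDMD_LASSO}, Assumption~\ref{as2} and the sample bound $n\ge C\delta^{-2}B^2sL(s)$ let us invoke \cite[Theorem 6.15]{adcock_book_2022}. With probability at least $1-\epsilon$, the (suitably normalised by $1/\sqrt{n}$, which does not change the equality constraints) matrix $\Psi_n^*$ has the RIP of order $2s$ with $\delta_{2s}<\delta\le\sqrt2-1$. Then \cite[Theorem 6.11]{adcock_book_2022} gives the rNSP of order $s$ with constants $\rho<1$ and $\gamma$ depending only on $\delta$.

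The key point is that this is one event for the single matrix $\Psi_n^*$, shared by all $l$ columns. So no union bound is needed, and the probability stays $1-\epsilon$ rather than $1-l\epsilon$.

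\textbf{Step 3: apply the QCBP error bound and recombine.} On this event, apply the standard QCBP recovery result under the rNSP \cite[Chapter 6]{adcock_book_2022} with noise level $0$. It gives
\begin{equation}
\|\mathbf{k}_i^\#-\mathbf{k}_{m,i}\|_2\le C'\frac{\sigma_s(\mathbf{k}_{m,i})}{\sqrt s}+D'\cdot 0,
\end{equation}
where $C'$ depends only on $\rho$ and $\gamma$. This holds for every minimiser, so non-uniqueness is harmless. Squaring and summing over rows using $\|A\|_F^2=\sum_i\|\text{row}_i\|_2^2$ yields the claim with $C_1=(C')^2$.

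\textbf{Main obstacle.} The only delicate point is the normalisation of $\Psi_n^*$. The RIP holds for $\frac{1}{\sqrt n}\Psi_n^*$, so the rescaling must be tracked to confirm it does not enter the noiseless bound. It does not: the equality constraint is scale invariant, and with $\sigma=0$ no noise term appears to be scaled.
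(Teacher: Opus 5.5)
Your proposal is correct and follows essentially the paper's route. You decouple into $l$ noiseless column problems, obtain the RIP and then the rNSP for the single matrix $\Psi_n^*$ via \cite[Theorems 6.15 and 6.11]{adcock_book_2022}, apply the rNSP-based QCBP recovery bound with zero noise (the paper cites \cite[Theorem 6.9]{adcock_book_2022}), and square and sum over rows. Reading the objective in \eqref{eq:BP_EDMD} as the entrywise $\ell_1$ norm is what that citation implicitly requires, since a literal Frobenius objective gives the minimum-$\ell_2$-norm solution, which does not in general recover sparse vectors. Your remark that the single RIP event is shared across all columns correctly accounts for the probability $1-\epsilon$ rather than $1-l\epsilon$.
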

\begin{proof}
    The proof is analogous to that of Theorem~\ref{thm:recovery_EDMD_LASSO} using~\cite[Theorem 6.9]{adcock_book_2022}. 
\end{proof}
Figure~\ref{fig:recovery_compressed_sensing} summarizes numerical experiments for LASSO-EDMD and QCBP-EDMD on the Lorenz-96 model~\cite{lorenz1996predictability} with $d=10$ and forcing constant $F=8$. Data are sampled along multiple trajectories, with $q$ points per trajectory, using degree-$2$ Legendre polynomials for $\vec{\psi}$ and degree-$1$ Legendre polynomials for $\vec{\phi}$. We vary the undersampling rate $n/(ml)$, i.e., the ratio of samples to unknowns in the optimization problem, and record the residuals on the validation set $\{\mathbf{z}_i\}_{i=1}^n$ also sampled along trajectories. For QCBP-EDMD, we use $\sigma > 0$ but small for numerical feasability. For both LASSO-EDMD the residual error hits a plateau below $10^{-1}$ while for QCBP-EDMD, it quickly reaches around $10^{-2}$ with an undersampling rate of $0.5$ or $420$ samples. Compared with EDMD, which required $10^5$ samples to achieve comparable accuracy for the Lorentz-96 model, exploiting the sparsity of the Koopman operator drastically reduces the sample complexity (experiment available at code repository).

\begin{figure}[t]
    \centering
        \includegraphics[width = \textwidth]{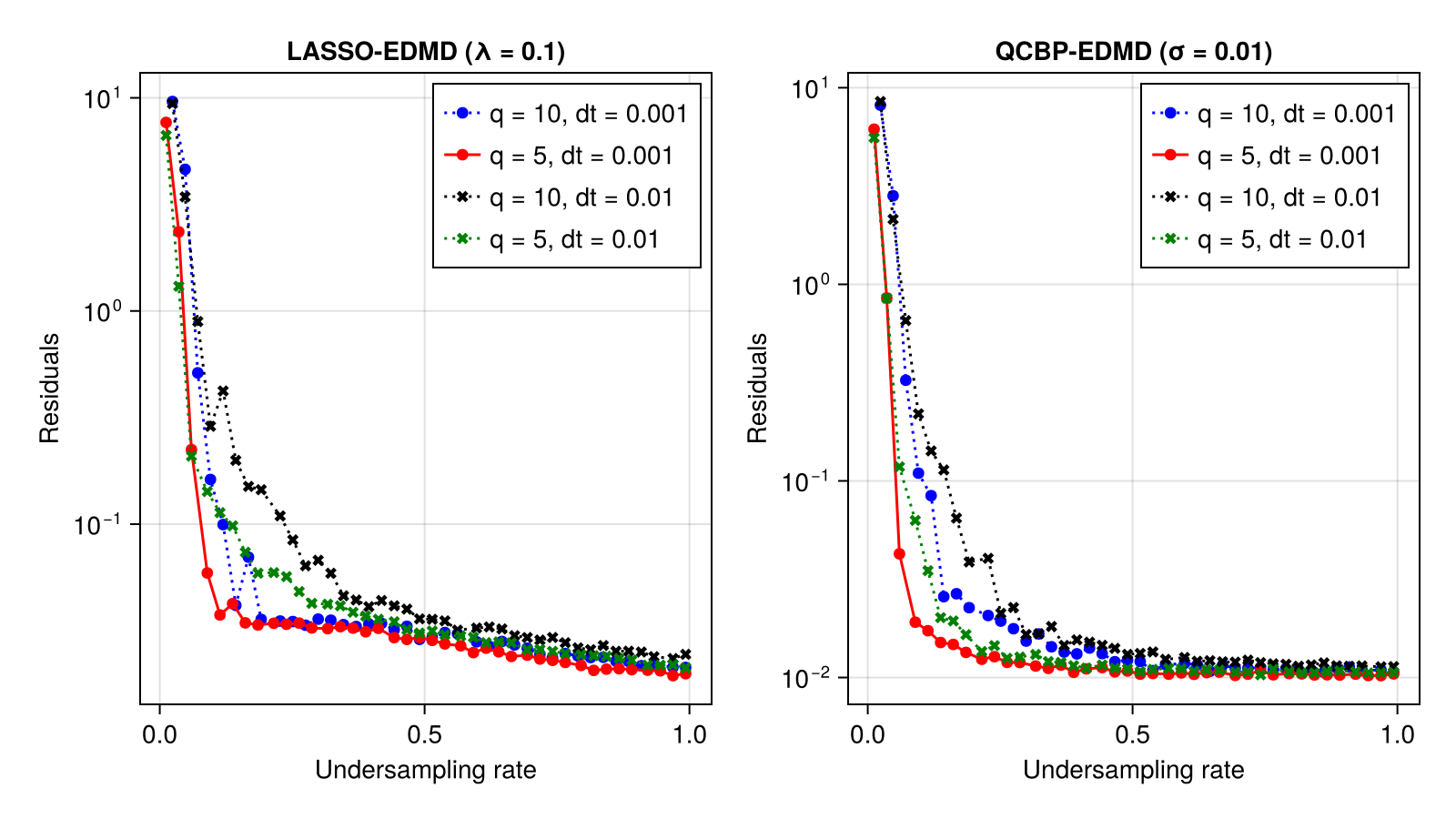}
    \caption{Recovery experiments for LASSO-EDMD and QCBP-EDMD.}\label{fig:recovery_compressed_sensing}
\end{figure}

\section{Discussion}

In this paper, we study EDMD as a least-squares approximation problem. We derive two convergence rates for i.i.d.\ sampling: a global classical $\mathcal{O}(n^{-1/2})$ rate under minimal assumptions and super-polynomial rates for Legendre and Chebyshev polynomial dictionaries under holomorphic dynamics. We consider the EDMD-LASSO and EDMD-QCBP methods, highlighting the benefits of sparsity by reducing the sample complexity required for accurate recovery in the undersampled regime using compressed sensing techniques. These results apply to related methods such as model identification by recasting them as an EDMD problem. The recovery guarantees of Theorems~\ref{thm:EDMD_MonteCarlo_Convergence} and~\ref{thm:EDMD_better_rate} could be extended to weighted EDMD with noisy measurements; we leave this for future work.

A separate challenge concerns the sampling assumption: while we restrict to i.i.d.\, sampling, dynamical data are naturally collected along trajectories, a setting in which EDMD may converge arbitrarily slowly~\cite{krengel1978speed}. For sampling along periodic or quasi-periodic trajectories, a complementary approach uses weighted Birkhoff averages~\cite{bou-sakr-el-tayar2026weighted} to accelerate convergence; this result does not work for chaotic systems like the logistic map where our super-polynomial convergence rates apply. A full investigation into which systems enjoy improved convergence rates for different sampling methods is one potential avenue of study. Another future direction is to extend Theorem~\ref{thm:EDMD_better_rate} to dictionaries other than Legendre and Chebyshev polynomials, for example to Jacobi polynomials or the Fourier basis on a periodic domains under assumption of Sobolev smoothness~\cite{adcock2026universalsampleoptimalalgorithmsrecovery}. While in this work we focus on EDMD and its variants, Koopman autoencoders are another popular method to approximate the Koopman operator that do not require a user-specified dictionary. Related work \cite{morris2026architectural} provides novel convergence guarantees in this setting. \\

\noindent \textbf{Acknowledgements. } DF acknowledges the help of Dr. Ben Adcock for enlightening discussions. SB acknowledges support from NSERC through grant RGPIN/2020-06766. All the authors acknowledge the support of FRQNT through grant 359708.


\printbibliography
\end{document}